\theoremstyle{plain}
\newtheorem{theorem}{Theorem}[section]
\newtheorem{lemma}[theorem]{Lemma}
\newtheorem{corollary}[theorem]{Corollary}
\theoremstyle{definition}
\newtheorem{question}[theorem]{Question}
\newtheorem{conjecture}[theorem]{Conjecture}
\newcommand{\re}{\upharpoonright}
\newcommand{\id}{\mathsf{id}}
\newcommand{\Gd}{\mathsf{G_\delta}}
\newcommand{\CDH}{\mathsf{CDH}}
\newcommand{\ZFC}{\mathsf{ZFC}}
\newcommand{\BB}{\mathcal{B}}
\newcommand{\RRR}{\mathbb{R}}
\newcommand{\ZZZ}{\mathbb{Z}}
\newcommand{\cccc}{\mathfrak{c}}
\newcommand{\bbbb}{\mathfrak{b}}
\begin{document}

\title{Countable dense homogeneity and topological groups}

\dedicatory{Dedicated to the memory of Peter Nyikos}

\author{Claudio Agostini}
\address{Institut f\"{u}r Diskrete Mathematik und Geometrie
\newline\indent Technische Universit\"{a}t Wien
\newline\indent  Wiedner Hauptstra\ss e 8--10/104
\newline\indent 1040 Vienna, Austria}
\email{claudio.agostini@tuwien.ac.at}
\urladdr{https://sites.google.com/view/claudioagostiniswebsite}

\author{Andrea Medini}
\address{Institut f\"{u}r Diskrete Mathematik und Geometrie
\newline\indent Technische Universit\"{a}t Wien
\newline\indent  Wiedner Hauptstra\ss e 8--10/104
\newline\indent 1040 Vienna, Austria}
\email{andrea.medini@tuwien.ac.at}
\urladdr{https://www.dmg.tuwien.ac.at/medini/}

\author{Lyubomyr Zdomskyy}
\address{Institut f\"{u}r Diskrete Mathematik und Geometrie
\newline\indent Technische Universit\"{a}t Wien
\newline\indent  Wiedner Hauptstra\ss e 8--10/104
\newline\indent 1040 Vienna, Austria}
\email{lyubomyr.zdomskyy@tuwien.ac.at}
\urladdr{https://www.dmg.tuwien.ac.at/zdomskyy/}

\date{January 16, 2025}

\thanks{This research was funded in whole by the Austrian Science Fund (FWF) DOI 10.55776/P35655 (Agostini), 10.55776/P35588 (Medini) and 10.55776/I5930 (Zdomskyy). For open access purposes, the authors have applied a CC BY public copyright license to any authors-accepted manuscript version arising from this submission.}

\begin{abstract}
Building on results of Medvedev, we construct a $\ZFC$ example of a non-Polish topological group that is countable dense homogeneous. Our example is a dense subgroup of $\ZZZ^\omega$ of size $\bbbb$ that is a $\lambda$-set. We also conjecture that every countable dense homogenous Baire topological group with no isolated points contains a copy of the Cantor set, and give a proof in a very special case.
\end{abstract}

\subjclass[2020]{Primary 54H11, 54G20.}

\keywords{Countable dense homogeneous, topological group, $\lambda$-set, Baire space, h-homogeneous, homogeneous.}

\maketitle

\tableofcontents

\section{Introduction}\label{section_introduction}

As is common in the literature about countable dense homogeneity, all spaces (including groups) are assumed to be separable and metrizable. By \emph{group} we mean topological group. By \emph{countable} we mean at most countable. Our reference for general topology is \cite{engelking}. Our reference for groups is \cite{arhangelskii_tkachenko}. Our reference for descriptive set theory is \cite{kechris}. For all other set-theoretic notions, we refer to \cite{kunen}. A space $X$ is \emph{countable dense homogeneous} ($\CDH$) if for every pair $(D,E)$ of countable dense subsets of $X$ there exists a homeomorphism $h:X\longrightarrow X$ such that $h[D]=E$.

The fundamental positive result in the theory of $\CDH$ spaces is due to Anderson, Curtis and van Mill \cite[Theorem 5.2]{anderson_curtis_van_mill}, and it states that every strongly locally homogeneous Polish space is $\CDH$.\footnote{\,Recall that a space $X$ is \emph{strongly locally homogeneous} if there exists a base $\BB$ for $X$ such that for every $U\in\BB$ and $(x,y)\in U\times U$ there exists a homeomorphism $h:X\longrightarrow X$ such that $h(x)=y$ and $h(z)=z$ for every $z\in X\setminus U$.} In particular, familiar spaces like the Cantor set $2^\omega$, the Baire space $\omega^\omega$, the Euclidean spaces $\RRR^n$, the spheres $\mathsf{S}^n$ and the Hilbert cube $[0,1]^\omega$ are all examples of $\CDH$ spaces. See \cite[Sections 14-16]{arhangelskii_van_mill} for much more on this topic.

Fitzpatrick and Zhou \cite[Problem 6]{fitzpatrick_zhou_densely} asked for non-Polish examples of $\CDH$ spaces (this is Problem 389 from the book ``Open problems in topology''), and the present article belongs to this line of research. Consistent examples were constructed in \cite[Theorem 4.6]{fitzpatrick_zhou_densely} and \cite[Theorem 3.5]{baldwin_beaudoin}. The first $\ZFC$ example was given in \cite{farah_hrusak_martinez_ranero}, using metamathematical methods. Another breakthrough was obtained in \cite{hernandez_gutierrez_hrusak_van_mill}, where the authors gave a more direct, ``down-to-earth'' $\ZFC$ construction. This approach was further developed in \cite{medini_cdh_powers}, \cite{medvedev_dh} and \cite{medvedev_dense}. In particular, one of these results of Medvedev (namely, Theorem \ref{theorem_medvedev_lambda}) will be a fundamental ingredient in obtaining our main result.

A more recent source of inspiration was the following interesting open question \cite[Question 1.2]{dobrowolski_krupski_marciszewski}. We remark that Hern\'{a}ndez-Guti\'{e}rrez \cite[Corollary 3.8]{hernandez_gutierrez} showed that, consistently, there exists a non-Polish $\mathsf{C}_p$-space that is $\CDH$.
\begin{question}[Dobrowolski, Krupski, Marciszewski]\label{question_vector_space}
Is there a $\ZFC$ example of a non-Polish $\CDH$ topological vector space?\footnote{\,Although the authors are not explicit about it, the remark that precedes \cite[Corollary 3.4]{dobrowolski_krupski_marciszewski} suggests that they are only considering topological vector spaces over $\RRR$.}
\end{question}

In fact, in his FWF research project P 35655, the second-listed author pointed out that the following (easier) question is also open.
\begin{question}[Medini]\label{question_group}
Is there a $\ZFC$ example of a non-Polish $\CDH$ group?
\end{question}

The main result of this article shows that Question \ref{question_group} has an affirmative answer (see Corollary \ref{corollary_main}). We remark that, for a consistent example of a non-Polish $\CDH$ group, it is not necessary to resort to $\mathsf{C}_p$-spaces. The earliest example seems to be \cite[Theorem 21]{medini_milovich}. In fact, every non-meager $\mathsf{P}$-filter, viewed as a subspace of $2^\omega$, gives such a group by \cite[Theorem 1.6]{hernandez_gutierrez_hrusak} (see also \cite[Theorem 10]{kunen_medini_zdomskyy}). However, the existence of a non-meager $\mathsf{P}$-filter in $\ZFC$ is a long-standing open problem \cite[Question 0.1]{just_mathias_prikry_simon}.

\section{More preliminaries and terminology}\label{section_preliminaries}

Given a space $Z$, we will say that a subspace $X$ of $Z$ is a \emph{copy} of a space $Y$ if $X$ is homeomorphic to $Y$. A space is \emph{crowded} if it is non-empty and it has no isolated points. A subset $S$ of a space $X$ is \emph{meager} in $X$ if there exist closed nowhere dense subsets $C_n$ of $X$ for $n\in\omega$ such that $S\subseteq\bigcup_{n\in\omega}C_n$. A space $X$ is \emph{meager} if $X$ is a meager subset of $X$. A space $X$ is \emph{Baire} if no non-empty open subset of $X$ is meager in $X$.

A space $X$ is \emph{homogeneous} if for every $(x,y)\in X\times X$ there exists a homeomorphism $h:X\longrightarrow X$ such that $h(x)=y$. Using translations, one sees that every group is homogeneous. A space $X$ is \emph{h-homogeneous} if every non-empty clopen subspace of $X$ is homeomorphic to $X$.

A subset $S$ of a group $X$ is \emph{precompact} if for every neighborhood $U$ of the identity in $X$ there exists a finite $F\subseteq X$ such that $S\subseteq FU\cap UF$. A group $X$ is \emph{locally precompact} if there exists a precompact neighborhood of the identity in $X$. The fact that $\ZZZ^\omega$ is not locally precompact (which is a simple exercise) will be crucial in the proof of Corollary \ref{corollary_main}. Here, we use $\ZZZ$ to denote the integers with the discrete topology and the standard operation of addition. Recall that every group $X$ is a dense subgroup of a Polish group $\varrho X$, known as the \emph{Ra\u{\i}kov completion} of $X$ (see \cite[Theorem 3.6.10, Proposition 3.6.20 and Proposition 4.3.8]{arhangelskii_tkachenko}).

Given $f,g\in\ZZZ^\omega$, we will write $f\leq^\ast g$ (respectively $f<^\ast g$ or $f=^\ast g$) to mean that $f(n)\leq g(n)$ (respectively $f(n)<g(n)$ or $f(n)=g(n)$) for all but finitely many $n\in\omega$. We will say that $S\subseteq\omega^\omega$ is \emph{bounded} if there exists $g\in\omega^\omega$ such that $f\leq^\ast g$ for every $f\in S$. Recall that
$$
\bbbb=\mathsf{min}\{|S|:S\text{ is an unbounded subset of }\omega^\omega\}.
$$

A space $X$ is a \emph{$\lambda$-set} if every countable subset of $X$ is a $\Gd$-subset of $X$. It is easy to realize that every crowded $\lambda$-set is meager, hence non-Polish. The only other fact concerning $\lambda$-sets that we will need is the following classical result \cite[Th\'{e}or\`{e}me 1]{rothberger}. See \cite[Section 2]{hernandez_gutierrez_hrusak_van_mill} and \cite[Section 5]{miller} for much more on this topic.

\begin{lemma}[Rothberger]\label{lemma_less_than_b}
Let $X$ be a space such that $|X|<\bbbb$. Then $X$ is a $\lambda$-set.
\end{lemma}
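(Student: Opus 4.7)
The goal is to show that every countable $A\subseteq X$ is a $\Gd$ subset of $X$, or equivalently that $X\setminus A$ is $F_\sigma$ in $X$. My plan is the classical Rothberger-style argument: encode the distance of each point $x\in X\setminus A$ from the points of $A$ as a function in $\omega^\omega$, and then use the hypothesis $|X|<\bbbb$ to dominate all such functions uniformly by a single $g\in\omega^\omega$.

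More precisely, I would first fix a compatible metric $d$ on $X$ and enumerate $A=\{a_n:n\in\omega\}$. To each $x\in X\setminus A$ I would assign $f_x\in\omega^\omega$ defined by $f_x(n)=\min\{k\geq 1:d(x,a_n)\geq 1/k\}$, which is well-defined because $x\neq a_n$. Since $|\{f_x:x\in X\setminus A\}|\leq|X|<\bbbb$, the definition of $\bbbb$ gives some $g\in\omega^\omega$ with $f_x\leq^\ast g$ for every $x\in X\setminus A$.

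Next, I would use $g$ to realise $X\setminus A$ as an $F_\sigma$. For each $k\in\omega$ let
$$C_k=\{x\in X:d(x,a_n)\geq 1/g(n)\text{ for all }n\geq k\}.$$
Each $C_k$ is closed by continuity of $d(\cdot,a_n)$, and a point $a_n\in A$ can belong to $C_k$ only when $n<k$, since otherwise we would need $0=d(a_n,a_n)\geq 1/g(n)>0$. The choice of $g$ guarantees $X\setminus A\subseteq\bigcup_k C_k$: given $x\in X\setminus A$, pick $k$ with $f_x(n)\leq g(n)$ for all $n\geq k$, so that $d(x,a_n)\geq 1/f_x(n)\geq 1/g(n)$ for such $n$, whence $x\in C_k$. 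Hence
$$X\setminus A=\bigcup_{k\in\omega}\bigl(C_k\setminus\{a_0,\dots,a_{k-1}\}\bigr),$$
and every set in this union is a closed set minus a finite set, which is $F_\sigma$ in a metric space (for instance, $C_k\setminus\{a_i\}=\bigcup_{m\in\omega}\{x\in C_k:d(x,a_i)\geq 1/m\}$).

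I do not foresee any serious obstacle here. The only mildly delicate step is the passage from ``closed'' to ``$F_\sigma$'' after removing the finitely many $a_i$ with $i<k$, which is routine. Conceptually, the whole argument is driven by the observation that ``$x$ is $1/g(n)$-far from $a_n$ for almost every $n$'' is simultaneously a closed condition on each tail and strictly incompatible with $x\in A$, so a single dominating function $g$ transforms the set-theoretic bound $|X|<\bbbb$ into the desired descriptive-set-theoretic conclusion.
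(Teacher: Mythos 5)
Your argument is correct, and it is the standard (Rothberger-style) proof of this classical fact; the paper itself gives no proof, citing Rothberger's original article instead. The only cosmetic points are that one should take $g(n)\geq 1$ for all $n$ (harmless, since every $f_x(n)\geq 1$) and dispose separately of the trivial case where $A$ is finite.
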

A subspace $X$ of $2^\omega$ is a \emph{Bernstein set} if $X\cap K\neq\varnothing$ and $(2^\omega\setminus X)\cap K\neq\varnothing$ for every copy $K$ of $2^\omega$ in $2^\omega$. It is easy to see that Bernstein sets exist in $\ZFC$, and that they are Baire spaces (in fact, using \cite[Corollary 1.9.13]{van_mill}, one can show that every closed subspace of a Bernstein set is a Baire space).

\section{The main result}\label{section_main}

In this section, we will obtain our main result (see Corollary \ref{corollary_main}), although Theorem \ref{theorem_main} seems to be of independent interest. We begin by stating two theorems of Medvedev that will be our fundamental tools.

The following result shows that, in the context of zero-dimensional groups, there is a particularly simple sufficient condition for h-homogeneity. For a proof, see \cite[Theorem 8]{medvedev_homogeneity}.\footnote{\,We remark that in \cite{medvedev_homogeneity} the assumption ``zero-dimensional paracompact'' often appears, while the stronger assumption ``ultraparacompact'' is in fact used. (Recall that a space $X$ is \emph{ultraparacompact} if every open cover of $X$ can be refined by a  clopen partition of $X$.) This does not affect us, since zero-dimensionality easily implies ultraparacompactness in the separable metrizable realm.}
\begin{theorem}[Medvedev]\label{theorem_medvedev_group}
Let $X$ be a zero-dimensional group. If $X$ is not locally precompact then $X$ is h-homogeneous.
\end{theorem}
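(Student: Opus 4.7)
My aim is to show that every non-empty clopen $U\subseteq X$ is homeomorphic to $X$. Since $X$, being a group, is homogeneous via left-translations, and left-translations send clopen sets to clopen sets, I may pre-translate so that the identity $e$ belongs to $U$; thus $U$ becomes a clopen neighborhood of $e$.

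The engine of the proof is the following consequence of non-local-precompactness: \emph{for every clopen neighborhood $U$ of $e$ there exist a clopen neighborhood $W$ of $e$ and a sequence $(x_n)_{n\in\omega}$ in $U$ such that $\{x_n W:n\in\omega\}$ is a pairwise disjoint family of clopen subsets of $U$.} To produce it, I first shrink $U$ to a clopen symmetric neighborhood $V$ of $e$ with $VV\subseteq U$, which is available by continuity of multiplication and zero-dimensionality. Non-precompactness of $V$ gives a clopen neighborhood $U'$ of $e$ such that $V\not\subseteq FU'$ for every finite $F\subseteq X$. Shrinking again, I take $W\subseteq V$ clopen with $W=W^{-1}$ and $WW\subseteq U'$. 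Starting from $x_0=e$ and inductively picking $x_{n+1}\in V\setminus\bigcup_{k\le n}x_k WW^{-1}$ (which is nonempty because $\bigcup_{k\le n}x_k WW^{-1}\subseteq\bigcup_{k\le n}x_k U'$ cannot exhaust $V$), the translates $x_nW$ are pairwise disjoint (if $x_{n+1}w_1=x_k w_2$ for some $w_1,w_2\in W$ and $k\le n$, then $x_{n+1}\in x_k WW^{-1}$, contrary to the choice) and contained in $VV\subseteq U$.

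Next, a Cantor-style back-and-forth produces a homeomorphism $U\to X$. I enumerate countable clopen bases of $U$ and of $X$, then build by recursion on $n$ a pair of finer and finer clopen partitions $\mathcal{P}_n^U$ of $U$ and $\mathcal{P}_n^X$ of $X$, together with a bijection $\varphi_n:\mathcal{P}_n^U\to\mathcal{P}_n^X$ and a homeomorphism between each piece $A\in\mathcal{P}_n^U$ and $\varphi_n(A)$. At each stage, to accommodate the next basis element on one side, the engine (applied after pre-translation, to any clopen piece) lets me subdivide the relevant piece into infinitely many clopen translates of a prescribed small clopen neighborhood of $e$; I then pair up such translates between the two sides in Hilbert-hotel fashion to absorb the imbalance between $U$ and $X$. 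Driving the mesh of the partitions to $0$ ensures that the $\varphi_n$'s patch together into a bijective homeomorphism $U\to X$.

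The main obstacle will be the engine step, where one must simultaneously guarantee that the translates $x_n W$ are pairwise disjoint, clopen, and contained in $U$. Disjointness and clopenness come essentially for free from the inductive choice of $x_{n+1}$ and from the fact that translations are homeomorphisms; containment is the delicate point, forcing the up-front nesting $W\subseteq V$ with $VV\subseteq U$ and the restriction $x_n\in V$ rather than merely $x_n\in U$. The back-and-forth step is then a routine, if fiddly, Cantor-style construction, its essential feature being that the ``infinitely many disjoint copies'' delivered by the engine allow arbitrary clopen refinements on one side to be matched on the other.
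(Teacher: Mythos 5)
The paper does not actually prove this statement --- it is quoted from Medvedev \cite[Theorem 8]{medvedev_homogeneity} --- so your proposal can only be measured against what a complete proof requires. Your ``engine'' is essentially sound and is the right way to use the hypothesis. One small point to justify: the definition of precompactness involves both $FU'$ and $U'F$, so its negation does not immediately hand you a single $U'$ with $V\not\subseteq FU'$ for \emph{every} finite $F$; you need the standard fact that left and right precompactness of a subset of a group coincide (see \cite{arhangelskii_tkachenko}, Section 3.7). Granting that, the inductive choice of the $x_n$ correctly yields infinitely many pairwise disjoint clopen translates of $W$ inside $U$.

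The genuine gap is in the back-and-forth. First, the engine does not let you ``subdivide the relevant piece into infinitely many clopen translates'': the sets $x_nW$ do not cover the piece, and their union need not be clopen (the $x_n$ may accumulate), so the remainder is not available as a partition element. This is repairable --- cover a clopen piece by translates of $V$ and take a disjoint clopen refinement (zero-dimensional separable metrizable spaces are ultraparacompact); non-precompactness forces the refinement to be infinite, giving a true clopen partition into infinitely many small pieces --- but it must be done. Second, and more seriously, the claim that ``driving the mesh of the partitions to $0$ ensures that the $\varphi_n$'s patch together into a bijective homeomorphism'' is false without completeness: the $\varphi_n$ are bijections between partitions, not point maps, and to extract a point map you must intersect the nested clopen sets $\varphi_n(A_n(x))$; in a non-Polish space a decreasing sequence of non-empty clopen sets with diameters tending to $0$ can have empty intersection, so the limit need be neither everywhere defined nor surjective. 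Since this theorem is applied in the paper precisely to a meager, hence non-Polish, group, that is the main case rather than a degenerate one. A correct argument must either make the stage-$n$ homeomorphisms stabilize locally (which conflicts with performing Hilbert-hotel shifts inside every piece at every stage and is exactly where Medvedev's proof does real work), or align the two sides so that corresponding nested sequences are simultaneously non-empty; your sketch does neither. A smaller issue of the same flavour: the neighborhood $W$ produced by the engine depends on the piece it is applied to, so the translates you propose to pair up on the two sides are translates of different, a priori non-homeomorphic, neighborhoods.
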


The following is \cite[Corollary 6]{medvedev_dh}, and it can be viewed as the culmination of the technique introduced in \cite{hernandez_gutierrez_hrusak_van_mill}. The fact that every meager $\CDH$ space is a $\lambda$-set is due to Fitzpatrick and Zhou \cite[Theorem 3.4]{fitzpatrick_zhou_baire}.
\begin{theorem}[Medvedev]\label{theorem_medvedev_lambda}
Let $X$ be an uncountable zero-dimensional h-homogeneous space. Then the following conditions are equivalent:
\begin{itemize}
\item $X$ is a meager $\CDH$ space,
\item $X$ is a $\lambda$-set.
\end{itemize}
\end{theorem}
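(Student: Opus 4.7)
The theorem is an equivalence, and the easy direction (meager $\CDH$ implies $\lambda$-set) is the Fitzpatrick--Zhou result already cited in the statement. The plan there is to take any countable $A\subseteq X$, pick a countable dense $D\subseteq X$ disjoint from $A$, apply $\CDH$ to obtain a self-homeomorphism of $X$ sending $D$ onto a dense set that absorbs $A$, and combine this with a writing of $X$ as a countable union of closed nowhere dense subsets to exhibit $A$ as a $\Gd$-subset of $X$.

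For the substantive direction, assume $X$ is uncountable, zero-dimensional, h-homogeneous and a $\lambda$-set. First observe that $X$ is crowded: an isolated point of $X$ would form a singleton clopen set, which by h-homogeneity would be homeomorphic to $X$, forcing $|X|=1$ and contradicting uncountability. A crowded $\lambda$-set is meager (as noted in the excerpt), so only $\CDH$ remains to be proved. Fix countable dense sets $D_1,D_2\subseteq X$ with enumerations, together with a compatible bounded metric on $X$. The approach is a back-and-forth construction of two clopen partition trees $\{U_t:t\in T\}$ and $\{V_t:t\in T\}$ indexed by a common countably-branching well-founded tree $T$, in which each $U_t$ and $V_t$ is a non-empty clopen subset of $X$, children partition their parent, and along every infinite branch of $T$ the diameters of the $U_t$ and $V_t$ tend to $0$. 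Zero-dimensionality allows us to refine any clopen subset into a countable clopen partition of arbitrarily small mesh, and h-homogeneity guarantees that each piece is itself a copy of $X$, so refinements of the same kind remain available at every node.

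The prospective homeomorphism $h\colon X\to X$ sends each $x\in X$ to the unique point in $\bigcap_n V_{t_x\re n}$, where $t_x$ is the infinite branch of $T$ with $x\in U_{t_x\re n}$ for all $n$. Vanishing diameters ensure this intersection has at most one point; non-emptiness for \emph{every} $x\in X$, not only for $x\in D_1$ (whose image is already prescribed in $D_2$ by the pairing), is the subtle point, and this is where the $\lambda$-set property is used. Writing $X\setminus D_2=\bigcup_n C_n$ with $C_n$ closed (valid since $D_2$ is $\Gd$ in $X$) and symmetrically $X\setminus D_1=\bigcup_n K_n$, at stage $n$ of the back-and-forth one refines the $V$-tree with respect to $C_n$ and the $U$-tree with respect to $K_n$ so that the two embeddings of $X$ into the space of infinite branches of $T$ induced by the partition trees end up having the same image; the composition $V_{t}^{-1}\circ U_{t}$ is then the desired self-homeomorphism of $X$ sending $D_1$ onto $D_2$, and continuity in both directions is automatic from clopenness of the pieces and vanishing mesh. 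The main obstacle is the bookkeeping in this back-and-forth: one must simultaneously pair the enumerations of $D_1$ and $D_2$, drive the mesh to $0$, and incorporate the $F_\sigma$-decompositions of $X\setminus D_i$ into the tree refinements, while keeping every partition piece clopen and hence (by h-homogeneity) itself a copy of $X$ amenable to further splitting.
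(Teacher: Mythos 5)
The paper does not actually prove this statement: it is imported verbatim as \cite[Corollary 6]{medvedev_dh}, with the forward implication credited to Fitzpatrick and Zhou, so there is no in-paper argument to compare against and you are in effect being asked to reprove Medvedev's theorem. Your outline of the forward direction is essentially the standard Fitzpatrick--Zhou argument and is fine once the details are arranged (cleanest route: write the meager space $X=\bigcup_n F_n$ with $F_n$ closed nowhere dense, choose a countable dense $E$ meeting each $F_n$ in a finite set so that $X\setminus E$ is $F_\sigma$ and $E$ is $\Gd$; $\CDH$ then makes \emph{every} countable dense set $\Gd$; and an arbitrary countable $A$ is the intersection of a countable dense $\Gd$-set containing it with the complement of a countable, hence $F_\sigma$, set).

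The hard direction, however, has a genuine gap at exactly the point you yourself flag as ``the subtle point.'' In a crowded $\lambda$-set, which is meager and hence very far from Polish, a decreasing sequence of non-empty clopen sets with diameters tending to $0$ can have empty intersection, so the formula $h(x)=$ the unique point of $\bigcap_n V_{t_x\re n}$ need not define a map on all of $X$, let alone a bijection. The sentence ``one refines the $V$-tree with respect to $C_n$ and the $U$-tree with respect to $K_n$ so that the two embeddings \ldots{} end up having the same image'' states the goal rather than supplying a mechanism: nothing in the sketch explains how the decomposition $X\setminus D_2=\bigcup_n C_n$ forces $\bigcap_n V_{t_x\re n}\neq\varnothing$ for a point $x\in K_m\setminus D_1$, and arranging this is the entire content of the theorem. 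In the actual arguments of Hern\'andez-Guti\'errez--Hru\v{s}\'ak--van Mill and Medvedev the homeomorphism is not obtained by intersecting branches: one decomposes $X\setminus D_1$ and $X\setminus D_2$ into matched families of pieces that are nowhere dense in $X$ but relatively clopen in the complements, realizes the matching by honest homeomorphisms between corresponding pieces (this is where h-homogeneity does real work, giving that the pieces on the two sides are homeomorphic \emph{to each other}, not merely each a copy of $X$), and then glues these partial homeomorphisms together with a back-and-forth bijection $D_1\to D_2$, checking continuity of the union at the end. As it stands your proposal is a plausible plan whose central step is missing, not a proof.
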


\begin{theorem}\label{theorem_main}
There exists a dense subgroup of $\ZZZ^\omega$ of size $\bbbb$ that is a $\lambda$-set.
\end{theorem}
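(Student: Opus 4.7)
The plan is to construct $G$ as the subgroup of $\ZZZ^\omega$ generated by $\ZZZ^{<\omega}$ together with a $\leq^\ast$-increasing unbounded sequence $\langle g_\alpha:\alpha<\bbbb\rangle$ in $\omega^\omega$, built by transfinite recursion of length $\bbbb$ with additional care described below. Setting $G_\alpha:=\langle\ZZZ^{<\omega}\cup\{g_\beta:\beta<\alpha\}\rangle$, the regularity of $\bbbb$ gives $|G_\alpha|\leq|\alpha|+\aleph_0<\bbbb$, so $G:=\bigcup_{\alpha<\bbbb}G_\alpha$ has size $\bbbb$, and density in $\ZZZ^\omega$ is immediate from $\ZZZ^{<\omega}\subseteq G$.

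For the $\lambda$-set property, fix a countable $C=\{c_n:n\in\omega\}\subseteq G$. By regularity of $\bbbb$, $C\subseteq G_\alpha$ for some $\alpha<\bbbb$. Since $|G_\alpha|<\bbbb$, Lemma~\ref{lemma_less_than_b} shows that $G_\alpha$ is itself a $\lambda$-set, and its standard proof yields $g\in\omega^\omega$ together with an explicit $G_\delta$ set $D\subseteq\ZZZ^\omega$, namely the complement of $\bigcup_{N\in\omega}(A_{g,N}\setminus\{c_0,\dots,c_{N-1}\})$ where $A_{g,N}:=\{y\in\ZZZ^\omega:\forall n\geq N,\,\exists k\leq g(n),\,y(k)\neq c_n(k)\}$, satisfying $D\cap G_\alpha=C$. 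The task is then to upgrade this to $D\cap G=C$.

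The upgrade is where the scale construction matters: one arranges that every $y\in G\setminus G_\alpha$, whose top generator $g_\beta$ in the representation modulo $\ZZZ^{<\omega}$ and $\{g_\gamma:\gamma<\bbbb\}$ satisfies $\beta\geq\alpha$, also has its disagreement function $f_y(n):=\min\{k:y(k)\neq c_n(k)\}$ dominated by $g$ (possibly after enlarging $g$ within $\omega^\omega$). Concretely, the successor $g_\beta$'s are chosen pointwise much larger than any bounded integer combination of earlier $g_\gamma$'s from a uniform coordinate onward, so that, for the fixed $C$, the top term $m\cdot g_\beta(k)$ dominates the remainder at coordinates $\leq g(n)$ for all but finitely many $n$, forcing the required disagreement and hence $f_y\leq^\ast g$.

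The principal obstacle is orchestrating the bookkeeping: countable subsets of $G$ correspond to countable subsets of some $G_\alpha$ ($\alpha<\bbbb$), of which there are potentially $\cccc$-many, but each is trapped below a stage $\alpha<\bbbb$, so a transfinite bookkeeping over ordinals below $\bbbb$ can list them as they arise and schedule the domination requirements for the later $g_\beta$'s. Regularity of $\bbbb$ ensures that the process terminates in $\bbbb$ stages and that no countable $C\subseteq G$ is left unhandled; combining this arrangement with Rothberger's argument then produces the required $\lambda$-set.
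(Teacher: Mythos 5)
Your construction of the group itself is essentially the paper's: the paper takes $X=\langle\{f_\alpha:\alpha<\bbbb\}\cup Q\rangle$ where $Q$ is the set of eventually-zero sequences and each $f_\alpha$ eventually pointwise dominates every positive integer combination of earlier generators, which matches your $G=\langle\ZZZ^{<\omega}\cup\{g_\beta:\beta<\bbbb\}\rangle$. The gap is in your ``upgrade'' step, and I do not think it can be repaired along the lines you sketch. For a fixed countable $C\subseteq G_\alpha$, the functions you need $g$ to dominate are (essentially) $n\mapsto\min\{k: m g_\beta(k)\neq(c_n-r)(k)\}$ for all $\beta\geq\alpha$, all $m\neq 0$ and all $r\in G_\beta$. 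For each \emph{fixed} $\beta$ this is a family of size $|G_\beta|<\bbbb$ and hence is $\leq^\ast$-bounded, but you then need a single $g$ dominating the resulting bounds for all $\bbbb$-many $\beta$, and a family of $\bbbb$ functions is exactly what can fail to be dominated. Nor can a careful choice of $g_\beta$ force the disagreement coordinate to be uniformly small: since $\ZZZ^{<\omega}\subseteq G_\beta$, for every $n$, every $L$ and every candidate value of $g_\beta$ there is a finitely supported $r$ with $m g_\beta+r$ agreeing with $c_n$ on $[0,L)$, so the claim ``pointwise much larger from a uniform coordinate onward'' is not achievable, and the control you need is irreducibly a $\leq^\ast$-statement about a size-$\bbbb$ family. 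The bookkeeping does not rescue this: $G$ has at least $\cccc$ countable subsets, $\cccc$ may exceed $\bbbb$, so they cannot be enumerated in $\bbbb$ stages; moreover the requirements attached to a given $C$ constrain \emph{all} later generators and refer to a function $g$ that is only chosen after the recursion is complete.

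The paper avoids all of this by changing the shape of the $\Gd$-set rather than strengthening the recursion. Having fixed $\delta<\bbbb$ above the indices of the top generators occurring in $C$ (using only that $\bbbb$ has uncountable cofinality), it takes the $\Gd$-set $\{z\in\ZZZ^\omega: |z(n)|\leq f_\delta(n)\text{ for infinitely many }n\}$. Every $x\in C$ satisfies $|x|\leq^\ast f_\delta$ and so lies in this set, while every $y\in X$ whose top generator has index above $\delta$ satisfies $|y|>^\ast f_\delta$ (the top term swallows the lower-order terms, by the domination clause of the recursion) and so is excluded. This requires only a single $>^\ast$-comparison per element $y$ --- no uniformity over $n\mapsto(\text{first disagreement with }c_n)$ is needed --- and it cuts $X$ down to a trace of size $<\bbbb$, to which Rothberger's Lemma \ref{lemma_less_than_b} is applied as a black box to produce a second $\Gd$-set whose intersection with the trace is exactly $C$. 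If you want to salvage your write-up, replace the attempted domination of the disagreement functions by this ``infinitely often small in absolute value'' separation; as it stands, the key step is unjustified and, absent extra hypotheses on $\bbbb$, false in the generality you assert it.
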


\begin{proof}
Given $f\in\ZZZ^\omega$, we will denote by $|f|$ the function defined by $|f|(n)=|f(n)|$ for $n\in\omega$. We will denote by $\vec{0}\in\ZZZ^\omega$ the constant function with value $0$. Given $S\subseteq\ZZZ^\omega$, we will denote by $\langle S\rangle$ the subgroup of $\ZZZ^\omega$ generated by $S$.

Using transfinite recursion, it is straightforward to construct $f_\alpha\in\ZZZ^\omega$ for $\alpha<\bbbb$ such that the following conditions are satisfied:
\begin{enumerate}
\item $f_\alpha(n)>0$ for each $\alpha<\bbbb$ and $n<\omega$,
\item $f_\alpha>^\ast k_0\cdot f_{\beta_0}+\cdots+k_n\cdot f_{\beta_n}$ whenever $n<\omega$, $k_0,\ldots,k_n\in\omega$, and $\beta_0,\ldots,\beta_n<\alpha<\bbbb$.
\end{enumerate}
Observe that $f_\alpha\neq f_\beta$ whenever $\alpha\neq\beta$. Set $Q=\{z\in\ZZZ^\omega:z=^\ast\vec{0}\}$ and
$$
X=\langle\{f_\alpha:\alpha<\bbbb\}\cup Q\rangle.
$$
It is clear that $X$ is dense in $\ZZZ^\omega$ and that $|X|=\bbbb$.

It remains to show that $X$ is a $\lambda$-set. For any given $z\in X\setminus Q$, fix $k(z)\in\ZZZ\setminus\{0\}$ and $\alpha(z)<\bbbb$ such that
$$
z=^\ast k(z)\cdot f_{\alpha(z)}+z'
$$
for some $z'\in\langle\{f_\beta:\beta<\alpha(z)\}\rangle$. Now pick a countable $C\subseteq X$. Since $\bbbb$ has uncountable cofinality, it is possible to fix $\delta<\bbbb$ such that $\delta>\mathsf{sup}\{\alpha(z):z\in C\}$.

\noindent\textbf{Claim.} Let $x\in C$, and let $y\in X\setminus Q$. Assume that $\alpha(y)>\delta$. Then
$$
|x|\leq^\ast f_\delta <^\ast |y|.
$$
\noindent\textit{Proof.} The first inequality follows easily from our choice of $\delta$, so we will only prove the second one. Pick $n\in\omega$, $k_0,\ldots,k_n=k(y)\in\ZZZ$ and $\alpha_0<\cdots <\alpha_n=\alpha(y)<\bbbb$ such that
$$
y=^\ast k_0\cdot f_{\alpha_0}+\cdots +k_n\cdot f_{\alpha_n}.
$$
Set
$$
y'=f_{\alpha(y)}-\sum_{i<n}|k_i|\cdot f_{\alpha_i},
$$
and observe that $y'\geq^\ast\vec{0}$ by condition $(2)$.

Next, we will prove that $|y|\geq^\ast y'$. First assume that $k(b)>0$. In this case, it is easy to realize that $y\geq^\ast y'\geq^\ast\vec{0}$, hence $|y|=^\ast y \geq^\ast y'$. Now assume that $k(b)<0$. In this case, one sees that
$$
-y=^\ast (-k(y))\cdot f_{\alpha(y)}-\sum_{i<n}k_i\cdot f_{\alpha_i}\geq f_{\alpha(y)}-\sum_{i<n}|k_i|\cdot f_{\alpha_i}=y'\geq^\ast\vec{0},
$$
so $|y|=^\ast -y \geq^\ast y'$. In conclusion, we must have
$$
|y|\geq^\ast y'= f_{\alpha(y)}-\sum_{i<n}|k_i|\cdot f_{\alpha_i}>^\ast f_\delta,
$$
where the last inequality follows from the assumption that $\alpha(y)\geq\delta$. $\blacksquare$

Finally, define
$$
G=\{z\in\ZZZ^\omega:|z(n)|\leq f_\delta(n)\text{ for infinitely many }n\in\ZZZ\},
$$
and observe that $G$ is a $\Gd$-subset of $\ZZZ^\omega$. Furthermore, as a consequence of the above Claim, it is easy to realize that $C\subseteq X\cap G$ and $|X\cap G|<\bbbb$. Therefore, by Lemma \ref{lemma_less_than_b}, there exists a $\Gd$-subset $G'$ of $\ZZZ^\omega$ such that $X\cap G\cap G'=C$. This clearly shows that $C$ is a $\Gd$-subset of $X$, as desired.
\end{proof}

We remark that the above theorem (to the best of our knowledge) gives the first $\ZFC$ example of an uncountable group that is a $\lambda$-set.\footnote{\,Rather unsurprisingly, we suggest the terminology \emph{$\lambda$-group}.} (For a consistent example, under the assumption that $\omega_1<\bbbb$, consider an arbitrary group of size $\omega_1$ and apply Lemma \ref{lemma_less_than_b}.)

\begin{corollary}\label{corollary_main}
There exists a non-Polish $\CDH$ group.	
\end{corollary}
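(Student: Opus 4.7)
The plan is to apply Medvedev's two theorems (Theorem \ref{theorem_medvedev_group} and Theorem \ref{theorem_medvedev_lambda}) to the group $X$ produced in Theorem \ref{theorem_main}. Specifically, I will show that $X$ itself is a non-Polish $\CDH$ group.

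First I would verify the hypotheses of Theorem \ref{theorem_medvedev_group}. The group $X$ is zero-dimensional as a subspace of $\ZZZ^\omega$. The main point is to show that $X$ is not locally precompact. This is where the remark in Section \ref{section_preliminaries} that $\ZZZ^\omega$ is not locally precompact enters: since $X$ is dense in the Polish (hence Ra\u{\i}kov complete) group $\ZZZ^\omega$, the Ra\u{\i}kov completion of $X$ is exactly $\ZZZ^\omega$. If $X$ had a precompact neighborhood of the identity, then $\varrho X=\ZZZ^\omega$ would be locally compact, contradicting the fact cited in the paper. Hence $X$ is not locally precompact, and Theorem \ref{theorem_medvedev_group} yields that $X$ is h-homogeneous.

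Next I would invoke Theorem \ref{theorem_medvedev_lambda}. The group $X$ has size $\bbbb\geq\aleph_1$, so it is uncountable. Combined with zero-dimensionality, h-homogeneity and the fact (given by Theorem \ref{theorem_main}) that $X$ is a $\lambda$-set, the equivalence in Theorem \ref{theorem_medvedev_lambda} produces that $X$ is a meager $\CDH$ space. Finally, since $X$ is dense in the crowded space $\ZZZ^\omega$, it has no isolated points, so it is a crowded meager space; in particular it fails to be Baire and therefore is not Polish.

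There is essentially no hard step: the entire argument is a packaging of already-established tools. If anything deserves the label of ``main obstacle'', it is the verification that $X$ is not locally precompact, but this is immediate from the Ra\u{\i}kov completion characterization recalled in Section \ref{section_preliminaries} together with the failure of local compactness of $\ZZZ^\omega$.
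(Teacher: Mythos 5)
Your proposal is correct and follows essentially the same route as the paper: apply Theorems \ref{theorem_medvedev_group} and \ref{theorem_medvedev_lambda} to the group of Theorem \ref{theorem_main}, the only substantive point being that $X$ is not locally precompact. The paper derives this from \cite[Lemma 3.7.5]{arhangelskii_tkachenko} (a dense locally precompact subgroup forces the ambient group to be locally precompact), while you phrase it via the Ra\u{\i}kov completion $\varrho X=\ZZZ^\omega$ being locally compact; these are the same argument in two guises.
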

\begin{proof}
Let $X$	be the group given by Theorem \ref{theorem_main}. By Theorems \ref{theorem_medvedev_group} and \ref{theorem_medvedev_lambda}, in order to conclude the proof, it will be enough to show that $X$ is not locally precompact. Assume, in order to get a contraction, that $X$ is locally precompact. Using \cite[Lemma 3.7.5]{arhangelskii_tkachenko}, one can easily show that every group containing a locally precompact dense subgroup is locally precompact. Since $\ZZZ^\omega$ is not locally precompact, we have reached a contradiction.
\end{proof}

Given that our example is meager, the following question is still open (as we mentioned in Section \ref{section_introduction}, non-meager $\mathsf{P}$-filters give consistent examples).
\begin{question}\label{question_baire_group}
Is there a $\ZFC$ example of a non-Polish Baire $\CDH$ group?
\end{question}

Since every $\CDH$ topological vector space is Baire (see the remarks that follow Conjecture \ref{conjecture_baire_groups}), an affirmative answer to Question \ref{question_baire_group} could be viewed as a further ``baby-step'' towards answering Question \ref{question_vector_space}.

\section{A conjecture about Baire $\CDH$ groups}\label{section_conjecture}

This section is motivated by the following open problem \cite[Question 4.7]{hernandez_gutierrez_hrusak_van_mill}.
\begin{question}[Hern\'{a}ndez-Guti\'{e}rrez, Hru\v{s}\'{a}k, van Mill]\label{question_baire}
Is it consistent with $\ZFC$ to have a crowded Baire $\CDH$ space of size less than $\cccc$?
\end{question}

Assuming Martin's Axiom for $\sigma$-centered posets, Baldwin and Beaudoin \cite[Theorem 3.5]{baldwin_beaudoin} constructed a $\CDH$ Bernstein set in $2^\omega$ (see also \cite{medini_cdh_products}). In an attempt to better understand $\CDH$ groups, we tried to modify this construction in order to obtain a subgroup of $2^\omega$. However, we ran into serious difficulties. This lead us to the following conjecture, which would resolve Question \ref{question_baire} in the realm of groups.
\begin{conjecture}\label{conjecture_baire_groups}
Every crowded Baire $\CDH$ group contains a copy of $2^\omega$.
\end{conjecture}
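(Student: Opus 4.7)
The plan is to argue by contradiction: suppose $X$ is a crowded Baire $\CDH$ group containing no copy of $2^\omega$. Since Theorems \ref{theorem_medvedev_group} and \ref{theorem_medvedev_lambda} are tailored to the zero-dimensional setting, I would first focus on that case, treating the positive-dimensional case separately (there, one expects continuum-theoretic arguments to yield a copy of $2^\omega$ directly, since every non-degenerate metrizable continuum contains one). So assume $X$ is zero-dimensional, consider its Ra\u{\i}kov completion $\varrho X$ (a zero-dimensional Polish group in which $X$ sits densely), and split according to whether $X$ is locally precompact.

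In the locally precompact case, $\varrho X$ is locally compact, and a standard argument using zero-dimensionality and metrizability produces a compact open subgroup $H\leq\varrho X$. Being compact, zero-dimensional, crowded, and metrizable, $H$ is homeomorphic to $2^\omega$. The subgroup $H\cap X$ is then a proper (by our contradiction hypothesis) dense subgroup of $H$, clopen in $X$, and hence a Baire space; one would also hope that $\CDH$ descends to $H\cap X$ via the group structure. In this subcase the task therefore reduces to the appealing special problem of ruling out proper dense Baire $\CDH$ subgroups of the Cantor set.

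In the remaining subcase, Theorem \ref{theorem_medvedev_group} gives that $X$ is h-homogeneous, and Theorem \ref{theorem_medvedev_lambda} combined with $X$ being Baire (hence non-meager) forces $X$ not to be a $\lambda$-set; so there exists a countable $C\subseteq X$ that fails to be $\Gd$ in $X$. Using this $C$ as a witness of complexity, I would attempt to build a Cantor scheme $\{U_s:s\in 2^{<\omega}\}$ of shrinking clopen neighborhoods in $X$ and, at each stage, use $\CDH$ homeomorphisms to guarantee that the intersections $\bigcap_n U_{x\re n}$ are non-empty and land inside $X$ for enough branches $x\in 2^\omega$. Pettis's theorem, applicable because $X$ is a Baire group, would provide the baseline non-meagerness input to keep such a construction alive.

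The main obstacle, and in my view the reason the conjecture is genuinely hard, is the mismatch between what $\CDH$ controls and what is needed: $\CDH$ describes how $X$ acts on its countable dense subsets, whereas a copy of $2^\omega$ is an uncountable compact object. Without some form of completeness or analyticity of $X$, a naive Cantor-scheme construction is liable to ``leak'' into $\varrho X\setminus X$. Bridging this gap, presumably by a clever interplay of group translations, the Baire category theorem in $X$, and Pettis-style arguments, is where a full proof would have to put in its real work.
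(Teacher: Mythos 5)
You are trying to prove something that the paper itself only states as a conjecture: the authors do not prove Conjecture \ref{conjecture_baire_groups}, and they establish it only in the very special case where $X$ has index $2$ in its Ra\u{\i}kov completion (Theorem \ref{theorem_conjecture}), by producing via Lemma \ref{lemma_conjecture} a homeomorphism $h$ with $(h-\id_X)[X]$ uncountable, extending it to a $\Gd$-subset of $\varrho X$, and applying the perfect set property of analytic sets after checking that the image of $\widetilde{h}-\id_G$ stays inside $X$. Your proposal should therefore be judged as an attempt at an open problem, and as such it is not a proof: every branch of your case analysis terminates in a subproblem that is left unresolved.

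Concretely: (a) the positive-dimensional case is not handled ``directly'' by continuum theory, because a positive-dimensional separable metrizable group need not contain any non-degenerate continuum (Erd\H{o}s space is a one-dimensional totally disconnected Polish group), so the reduction to the zero-dimensional case is already incomplete; (b) in the locally precompact branch you reduce to showing that $2^\omega$ has no proper dense Baire $\CDH$ subgroup, which is essentially an instance of the conjecture itself rather than a step towards it, and even the transfer of $\CDH$ from $X$ to the clopen subgroup $H\cap X$ is only ``hoped'' for, not argued (note that $\CDH$ is not in general inherited by clopen subspaces without something like h-homogeneity); (c) in the non-locally-precompact branch, the deduction that $X$ is h-homogeneous and not a $\lambda$-set is correct, but the Cantor scheme is never constructed --- you yourself identify that it ``leaks'' into $\varrho X\setminus X$, and that leak is precisely the open difficulty. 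None of these gaps is cosmetic; each one is the whole problem in disguise. The only fragment of your sketch that points in the direction the paper actually exploits is the idea of working in $\varrho X$ and using descriptive-set-theoretic regularity (analyticity and the perfect set property) to manufacture a copy of $2^\omega$; if you want to make partial progress, Lemma \ref{lemma_conjecture} and Theorem \ref{theorem_conjecture} show how far that idea currently reaches.
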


Another interesting consequence of Conjecture \ref{conjecture_baire_groups} would be the equivalence of the following conditions for every crowded $\CDH$ group $X$:
\begin{itemize}
\item $X$ is Baire,
\item $X$ contains a copy of $2^\omega$.	
\end{itemize}
In fact, Dobrowolski, Krupski and Marciszewski \cite[Theorem 3.1]{dobrowolski_krupski_marciszewski} proved that every homogeneous $\CDH$ space containing a copy of $2^\omega$ is Baire. Notice however that mere homogeneity would not be sufficient for this equivalence, since the $\CDH$ Bernstein set mentioned above can be easily made homogeneous as well.

We conclude this article by proving Conjecture \ref{conjecture_baire_groups} in the very special case of groups that have index $2$ in their completion (see Theorem \ref{theorem_conjecture}). In the following results, although the group $X$ is not assumed to be commutative, we will still use the additive notation for the sake of readability. In particular, given $f,g:X\longrightarrow X$, we will use $f-g$ to denote the function obtained by setting $(f-g)(x)=f(x)-g(x)$. We will use $\id_X$ to denote the identity function on a set $X$.
\begin{lemma}\label{lemma_conjecture}
Let $X$ be a crowded Baire $\CDH$ group. Then there exists a homeomorphism $h:X\longrightarrow X$ such that $(h-\id_X)[X]$ is uncountable.
\end{lemma}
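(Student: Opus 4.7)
The plan is to argue by contradiction: I will choose countable dense sets $D, E \subseteq X$ so that, for the homeomorphism $h \colon X \to X$ produced by $\CDH$ from $h[D] = E$, the displacement $(h - \id_X)[X]$ is forced to be uncountable. The Baire-category observation that drives the contradiction is this: if the displacement were countable, then $X = \bigcup_{a \in (h - \id_X)[X]} F_a$ with each $F_a := \{x \in X : h(x) - x = a\}$ closed (as the preimage of $\{a\}$ under the continuous map $x \mapsto h(x) - x$), so some $F_a$ must have non-empty interior $U$ by the Baire property of $X$. On $U$ we then have $h(x) + (-x) = h(x) - x = a$, which rearranges to $h(x) = a + x$, so $h$ coincides on $U$ with the \emph{left} translation by $a$.

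To construct the desired $E$, I would fix any countable dense $D \subseteq X$ and a countable base $\{V_n : n < \omega\}$ of $X$, and recursively pick distinct points $e_n \in V_n$ satisfying $e_n \notin \bigcup_{i < n}\bigl(e_i + ((-D + D) \setminus \{0\})\bigr)$. This is possible because every non-empty open subset of the crowded Baire space $X$ is uncountable---if some non-empty open $V \subseteq X$ were countable, it would be a countable union of closed singletons, each nowhere dense in $V$ by crowdedness, making $V$ meager in itself and contradicting that open subspaces of Baire spaces are Baire---while the set to be avoided at stage $n$ is countable. Exploiting the symmetry $-(-D + D) = -D + D$, one checks that the resulting set $E = \{e_n : n < \omega\}$ is countable dense in $X$ and satisfies $(-E + E) \cap (-D + D) = \{0\}$.

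To finish, apply $\CDH$ to obtain $h \colon X \to X$ with $h[D] = E$. If its displacement were countable, then by the Baire-category observation above there would be a non-empty open $U$ and $a \in X$ with $h(x) = a + x$ on $U$; since $h$ is a bijection, setting $V = a + U = h[U]$ would give $E \cap V = h[D \cap U] = a + (D \cap U) \subseteq a + D$. Since $V$ is non-empty open in the crowded space $X$ and $E$ is dense, $E \cap V$ would be infinite, so we could pick distinct $e_1 = a + d_1$ and $e_2 = a + d_2$ in $E \cap V$, whence $0 \neq -e_1 + e_2 = -d_1 + d_2$ would lie in $(-E + E) \cap (-D + D)$, a contradiction. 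I expect the only point requiring real care to be the non-commutativity of $X$: the convention $h(x) - x = h(x) + (-x)$ forces us to track \emph{left} translations and to work with the left-invariant expression $-E + E$ rather than the naive $E - E$, but beyond this bookkeeping the argument relies only on the Baire property, crowdedness, and $\CDH$.
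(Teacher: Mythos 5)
Your proof is correct. The decisive step is the same as in the paper: if $(h-\id_X)[X]$ were countable, then $X$ would be a countable union of the closed fibres of the continuous map $x\mapsto h(x)-x$, so by Baireness $h$ would agree with a left translation $x\mapsto a+x$ on some non-empty open $U$, and one then contradicts this using a suitably chosen pair of countable dense sets. Where you diverge is in the choice of that pair. The paper takes $D$ to be a countable dense \emph{subgroup}, sets $E=x_0+D$ for some $x_0\notin D$ (so $D\cap E=\varnothing$), and applies $\CDH$ to the pair $(D\cup E,\,D)$; the contradiction is then two lines of algebra: $x\in D\cap U$ forces the translation constant $x_1$ into $D$, and then $y\in E\cap U$ with $x_1+y\in D$ forces $y\in D$. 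You instead keep $D$ arbitrary and build $E$ by a recursive avoidance argument so that $(-E+E)\cap(-D+D)=\{0\}$, which requires the auxiliary observation that non-empty open subsets of a crowded Baire space are uncountable (your justification of this is fine), and then apply $\CDH$ to $(D,E)$ directly. Both routes are sound and of comparable depth; the paper's is shorter because the subgroup structure of $D$ does the combinatorial work that your genericity construction does by hand, while yours has the mild virtue of not needing to pass to a dense subgroup and of applying $\CDH$ to the more natural pair $(D,E)$. Your handling of non-commutativity (left translations, the left-invariant difference set $-E+E$, and the cancellation $-(a+d_1)+(a+d_2)=-d_1+d_2$) is exactly right and matches the care the paper takes with its additive notation for a possibly non-abelian group.
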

\begin{proof}
Fix a countable dense subgroup $D$ of $X$. Notice that $X$ must be uncountable, because it is a crowded $\CDH$ space. Therefore, we can fix $x_0\in X\setminus D$. Set $E=x_0+D$, and observe that $D\cap E=\varnothing$. Let $h:X\longrightarrow X$ be a homeomorphism such that $h[D\cup E]=D$. Set $f=h-\id_X$.

Assume, in order to get a contradiction, that $f[X]$ is countable. Then, since $X$ is Baire and
$$
X=\bigcup_{y\in f[X]}f^{-1}(y),
$$
there must be a non-empty open subset $U$ of $X$ such that $f\re U$ is constant. In other words, there exists $x_1\in X$ such that $h(z)=x_1+z$ for every $z\in U$. Fix $x\in D\cap U$ and $y\in E\cap U$. Notice that $x_1\in D$ because $x_1+x=h(x)\in D$ and $D$ is a subgroup of $X$. Since $x_1+y=h(y)\in D$, it follows that $y\in D$, a contradiction.
\end{proof}

\begin{theorem}\label{theorem_conjecture}
Let $X$ be a crowded Baire $\CDH$ group. Assume that $X$ has index $2$ in its Ra\u{\i}kov completion $\varrho X$. Then $X$ contains a copy of $2^\omega$.
\end{theorem}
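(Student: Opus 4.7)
The plan is to combine Lemma~\ref{lemma_conjecture} with Lavrentiev's theorem on the extension of homeomorphisms and the perfect set theorem for analytic sets. First I would apply Lemma~\ref{lemma_conjecture} to fix a homeomorphism $h\colon X\longrightarrow X$ such that $f[X]$ is uncountable, where $f:=h-\id_X$. I would fix $x^*\in\varrho X\setminus X$, so that $\varrho X=X\cup(x^*+X)$, and note that $X$ is a normal subgroup of $\varrho X$ because it has index $2$. Then I would invoke Lavrentiev's theorem to extend $h$ to a homeomorphism $\hat h\colon A\longrightarrow B$ between $\Gd$ subsets $A, B$ of $\varrho X$, each containing $X$; in particular, $A$ is Polish.

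The crucial observation is that $\hat h$ automatically preserves the coset decomposition of $\varrho X$ for purely set-theoretic reasons: being a bijection with $\hat h[X]=h[X]=X$, it satisfies $\hat h[A\setminus X]=B\setminus X$, and both sides lie inside the unique coset $\varrho X\setminus X=x^*+X$. Setting $\hat f:=\hat h-\id_A$, which continuously extends $f$, this forces $\hat f[A]\subseteq X$: for $y\in X$ it is clear, and for $y\in A\setminus X$ both $y$ and $\hat h(y)$ lie in $x^*+X$, whence $\hat f(y)\in(x^*+X)-(x^*+X)\subseteq X$ (one uses normality of $X$ to handle the non-abelian case).

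To finish, note that $\hat f[A]$ is an analytic subset of the Polish space $\varrho X$ that is contained in $X$ and contains the uncountable set $f[X]$; the perfect set theorem for analytic sets then produces a copy of $2^\omega$ inside $\hat f[A]\subseteq X$. The main obstacle I expect is recognizing the coset-preservation observation in the second paragraph: once it is in place, the index-$2$ hypothesis translates the uncountable image $f[X]\subseteq X$ into a genuinely \emph{analytic} uncountable subset of $X$, which is all that is needed. The argument has no obvious analogue when $[\varrho X:X]>2$, since then there would be no purely bijective reason for $\hat h$ to match cosets.
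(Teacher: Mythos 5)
Your proposal is correct and follows essentially the same route as the paper's proof: Lemma~\ref{lemma_conjecture}, Lavrentiev extension to a $\Gd$-homeomorphism, the observation that the extension preserves the two-coset decomposition because it fixes $X$ setwise, normality of the index-$2$ subgroup to conclude $\hat f[A]\subseteq X$, and the perfect set property for analytic sets. The only cosmetic difference is that the paper arranges the Lavrentiev extension to be a self-homeomorphism of a single $\Gd$-set $G$, whereas you allow distinct domain and codomain $A$ and $B$; the coset argument works either way.
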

\begin{proof}
Let $h$ denote the homeomorphism given by Lemma \ref{lemma_conjecture}. By \cite[Exercise 3.10]{kechris}, we can fix a $\Gd$-subset $G$ of $\varrho X$ and a homeomorphism $\widetilde{h}:G\longrightarrow G$ such that $h\subseteq\widetilde{h}$. Notice that $G$ is a Polish space by \cite[Theorem 3.11]{kechris}, hence $(\widetilde{h}-\id_G)[G]$ is analytic. Furthermore, Lemma \ref{lemma_conjecture} guarantees that $(\widetilde{h}-\id_G)[G]$ is uncountable. Since analytic sets have the perfect set property (see \cite[Theorem 29.1]{kechris}), in order to conclude the proof, it will be enough to show that $(\widetilde{h}-\id_G)[G]\subseteq X$.

Since $X$ has index $2$ in $\varrho X$, there exists $z_0\in\varrho X$ such that $z_0+X=\varrho X\setminus X$. Pick $z\in G$. If $z\in X$ then clearly $(\widetilde{h}-\id_G)(z)=h(z)-z\in X$, so assume that $z\in G\setminus X$. Then $z=z_0+x$ and $\widetilde{h}(z)=z_0+y$ for suitable $x,y\in X$. Hence
$$
(\widetilde{h}-\id_G)(z)=(z_0+y)-(z_0+x)=z_0+(y-x)-z_0\in z_0+X-z_0=X,
$$
as desired, where the last equality follows from the well-known (and easy to prove) fact that subgroups of index two are normal.
\end{proof}


\begin{thebibliography}{99}

\bibitem[ACvM]{anderson_curtis_van_mill}\textsc{R. D. Anderson, D. W. Curtis, J. van Mill.} A fake topological Hilbert space. \emph{Trans. Amer. Math. Soc.} \textbf{272:1} (1982), 311--321.

\bibitem[AvM]{arhangelskii_van_mill}\textsc{A. V. Arhangel'skii, J. van Mill.} Topological homogeneity. \emph{Recent Progress in General Topology III.} Atlantis Press, 2014. 1--68.

\bibitem[AT]{arhangelskii_tkachenko}\textsc{A. V. Arhangel'skii, M. Tkachenko.} \emph{Topological groups and related structures.} Atlantis Studies in Mathematics, 1. Atlantis Press, Paris; World Scientific Publishing Co. Pte. Ltd., Hackensack, NJ, 2008.

\bibitem[BB]{baldwin_beaudoin}\textsc{S. Baldwin, R.E. Beaudoin.} Countable dense homogeneous spaces under Martin's axiom. \emph{Israel J. Math.} \textbf{65:2} (1989), 153--164.

\bibitem[DKM]{dobrowolski_krupski_marciszewski}\textsc{T. Dobrowolski, M. Krupski, W. Marciszewski.} A countable dense homogeneous topological vector space is a Baire space. \emph{Proc. Amer. Math. Soc.} \textbf{149:4} (2021), 1773--1789.

\bibitem[En]{engelking}\textsc{R. Engelking.} \emph{General topology.} Revised and completed edition. Sigma Series in Pure Mathematics, vol. 6. Heldermann Verlag, Berlin, 1989.

\bibitem[FHMR]{farah_hrusak_martinez_ranero}\textsc{I. Farah, M. Hru\v{s}\'{a}k, C. A. Mart\'{i}nez Ranero.} A countable dense homogeneous set of reals of size $\aleph_1$. \emph{Fund. Math.} \textbf{186:1} (2005), 71--77.

\bibitem[FZ1]{fitzpatrick_zhou_densely}\textsc{B. Fitzpatrick Jr., H. X. Zhou.} Some open problems in densely homogeneous spaces. \emph{Open problems in topology.} North-Holland, Amsterdam, 1990. 251--259.

\bibitem[FZ2]{fitzpatrick_zhou_baire}\textsc{B. Fitzpatrick Jr., H. X. Zhou.} Countable dense homogeneity and the Baire property. \emph{Topology Appl.} \textbf{43:1} (1992), 1--14.

\bibitem[HG]{hernandez_gutierrez}\textsc{R. Hern\'{a}ndez-Guti\'{e}rrez.} Countable dense homogeneity of function spaces. \emph{Topology Proc.} \textbf{56} (2020), 125--146.

\bibitem[HGH]{hernandez_gutierrez_hrusak}\textsc{R. Hern\'{a}ndez-Guti\'{e}rrez, M. Hru\v{s}\'{a}k.} Non-meager $P$-filters are countable dense homogeneous. \emph{Colloq. Math.} \textbf{130:2} (2013), 281--289.

\bibitem[HGHvM]{hernandez_gutierrez_hrusak_van_mill}\textsc{R. Hern\'{a}ndez-Guti\'{e}rrez, M. Hru\v{s}\'{a}k, J. van Mill.} Countable dense homogeneity and $\lambda$-sets. \emph{Fund. Math.} \textbf{226:2} (2014), 157--172.

\bibitem[JMPS]{just_mathias_prikry_simon}\textsc{W. Just, A. R. D. Mathias, K. Prikry, P. Simon.} On the existence of large p-ideals. \emph{J. Symbolic Logic.} \textbf{55:2} (1990), 457--465.

\bibitem[Ke]{kechris}\textsc{A. S. Kechris.} \emph{Classical descriptive set theory.} Graduate Texts in Mathematics, 156. Springer-Verlag, New York, 1995.

\bibitem[Ku]{kunen}\textsc{K. Kunen.} \emph{Set theory.} Studies in Logic (London), 34. College Publications, London, 2011.

\bibitem[KMZ]{kunen_medini_zdomskyy}\textsc{K. Kunen, A. Medini, L. Zdomskyy.} Seven characterizations of non-meager P-filters. \emph{Fund. Math.} \textbf{231:2} (2015), 189--208.

\bibitem[Me1]{medini_cdh_powers}\textsc{A. Medini.} Countable dense homogeneity in powers of zero-dimensional definable spaces. \emph{Canad. Math. Bull.} \textbf{58:2} (2015), 334--349.

\bibitem[Me2]{medini_cdh_products}\textsc{A. Medini.} Products and countable dense homogeneity. \emph{Topology Proc.} \textbf{46} (2015), 135--143.

\bibitem[MM]{medini_milovich}\textsc{A. Medini, D. Milovich.} The topology of ultrafilters as subspaces of $2^\omega$. \emph{Topology Appl.} \textbf{159:5} (2012), 1318--1333.

\bibitem[Mv1]{medvedev_homogeneity}\textsc{S. V. Medvedev.} Homogeneity and h-homogeneity. \emph{Topology Appl.} \textbf{160:18} (2013), 2523--2530.

\bibitem[Mv2]{medvedev_dh}\textsc{S. V. Medvedev.} Metrizable DH-spaces of the first category. \emph{Topology Appl.} \textbf{179} (2015), 171--178.

\bibitem[Mv3]{medvedev_dense}\textsc{S. V. Medvedev.} Zero-dimensional CDH spaces with a dense completely metrizable subset. \emph{Acta Math. Hungar.} \textbf{159:1} (2019), 164--173.

\bibitem[vM]{van_mill}\textsc{J. van Mill.} \emph{The infinite-dimensional topology of function spaces.} North-Holland Mathematical Library, 64. North-Holland Publishing Co., Amsterdam, 2001.

\bibitem[Mi]{miller}\textsc{A. W. Miller.} Special subsets of the real line. \emph{Handbook of set-theoretic topology.} North-Holland, Amsterdam, 1984. 201--233.

\bibitem[Ro]{rothberger}\textsc{F. Rothberger.} Sur un ensemble toujours de premiere categorie qui est depourvu de la propri\'{e}t\'{e} $\lambda$. \emph{Fund. Math.} \textbf{32} (1939), 294--300.

\end{thebibliography}
\end{document}